\newcommand{\reals}{\mathbb{R}}
\newtheorem{mythm}{Theorem}[section]
\theoremstyle{definition}
\numberwithin{equation}{section}
\theoremstyle{plain}
\begin{document}

\title{Estimation of the $r$-th derivative of a density function by the tilted kernel estimator}
\date{}
\maketitle
\begin{abstract}
We consider the problem of estimating the $ s $-th derivative of a density function $ f $ by the \textit{tilted} Kernel estimator introduced in \cite{Hall_Doosti}. Then we further show this estimator achieves the same convergence rate, in probability, the wavelet estimators achieved as shown in \cite{Hall_Patil_1995}. That is, the convergence rate of $ O_p \left( n^{-(r-s)/(2r+1)} \right) $.
\end{abstract}

\section{Tilted Kernel estimators for derivative}

\subsection{Settings and assumptions}

Suppose we have an $ i.i.d $ sample of random variables $ X_1, X_2, \ldots, X_n $. \cite{Hall_Doosti} introduced the \textit{tilted} Kernel estimator $\hat{f}_n(x) $ as

\begin{equation} \label{Tilted Kernel Estimator}
\hat{f}_n(x) = \frac{1}{h} \sum_{i=1}^{n} p_{i,n} K \left( \frac{x - X_i}{h} \right),
\end{equation}
where $ \sum_{i=1}^n p_{i,n} = 1 $, $ K $ is bounded, symmetric and compactly supported. Note that the conventional Kernel estimator is a special case of when $ p_{i,n} = 1/n $ for all $ i = 1, 2, \ldots, n $.

Observe that $ p_n = \{ p_{1,n}, p_{2,n}, \ldots, p_{n,n} \} $ forms a probability measure on $ n $ points for $ n \in \mathbb{N} $, let us agree to call the family $ \{p_n : n \in \mathbb{N} \} $ the \textit{family of probability measures associated to} $ \hat{f}_n $.

Suppose we know the the density function $ f $ is $ r $ times differentiable, it would be intuitive to just take the $ s $-th derivative of $ \hat{f}_n $ to be the estimator of $ f^{(s)} $, for $ s \leq r $, namely,

\begin{equation}
\hat{f}^{(s)}_n (x) = \frac{1}{h} \sum_{i=1}^{n} p_{i,n} \frac{\partial^s}{\partial x^s} K \left( \frac{x - X_i}{h} \right).
\end{equation}

\subsection{Selection of the associated probability measures}

In \cite{Hall_Doosti}, the authors selected the associated probability measures to be, for $ n \in \mathbb{N} $,

\begin{equation} \label{associated measures}
p_{i,n} = \frac{1 + h_n^2 g(X_i)}{n (1 + \Delta)}, \quad \text{where} \quad \Delta_n = \frac{h_n^2}{n} \sum_{i=1}^{n} g(X_i), \quad i = 1, ,2 \ldots, n,
\end{equation}
with $ h_n $ the bandwidth and $ g $ defined as follow,

\begin{equation}
g = \frac{1}{2} \frac{f''}{f} + h^2 \frac{e_2}{f} + \ldots + h^{2r-2} \frac{e_{2r-2}}{f}.
\end{equation}
The functions $ e_2, e_4, \ldots, e_{2r-2} $ are obtained recursively as in \cite{Hall_Doosti}. Consequently, we can express the expectation of $ \hat{f}_n $ as,
\begin{align*}
E\left( \hat{f}_n(x) \right) = \psi(x) = f(x) + \frac{h^{r-s}}{(r-1)!} f^{(r)} (\xi) \int u^r K(u) du.
\end{align*}

\subsection{Convergence speed}
Suppose we are estimation the $ s $-th derivative of a density. It is shown in \cite{Hall_Patil_1995} that the convergence rate, in probability, of $ n^{-(r-s)/(2s+1)} $ can be achieved by the wavelet estimator.

We show that, in Theorem \ref{speed}, the same convergence speed can be achieved by the tilted kernel estimator with an appropriate associated probability measures. That is, we show that
\begin{equation}
\hat{f}^{(s)}_n(x) - f^{(s)}(x) = O_p \left( n^{-\frac{r-s}{2r+1}} \right),
\end{equation}
with the appropriate choice of the bandwidth $ h $.

\section{Technical results}

\begin{mythm} \label{speed}
Suppose $ f $ is a $ r $-th time differentiable density function where $ r $ is even . Assume $ f^{(2)}, f^{(4)}, \ldots, f^{(r)} $ and $ f^{(2)}/f, f^{(4)}/f, \ldots, f^{(r)}/f $ exist and are bounded on $ \reals $. Then there exists $ h_n > 0 , n \in \mathbb{N} $ and a set of  probability measures $ \{ \mu_n : n \in \mathbb{N} \} $ associated to $ \hat{f}_n $ such that for all $ x $ and $ s \leq r $,
\begin{equation}
\hat{f}^{(s)}_n(x) - f^{(s)}(x) = O_p \left( n^{-\frac{r-s}{2r+1}} \right)
\end{equation}
where $  $
\end{mythm}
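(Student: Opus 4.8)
The plan is to prove convergence in probability by the standard bias–variance decomposition, writing
\[
\hat{f}^{(s)}_n(x) - f^{(s)}(x) = \left( \hat{f}^{(s)}_n(x) - E\hat{f}^{(s)}_n(x) \right) + \left( E\hat{f}^{(s)}_n(x) - f^{(s)}(x) \right),
\]
and controlling each term separately so that both are $O_p\big(n^{-(r-s)/(2r+1)}\big)$ once the bandwidth is tuned. First I would compute the expectation of $\hat{f}^{(s)}_n$. Since $\hat f^{(s)}_n$ is obtained by differentiating $\hat f_n$ under the sum $s$ times, and the associated measure \eqref{associated measures} was constructed so that $E(\hat f_n(x)) = \psi(x) = f(x) + \frac{h^{r-s}}{(r-1)!} f^{(r)}(\xi)\int u^r K(u)\,du$, I would differentiate this bias expansion (or re-run the change of variables $u = (x-X_i)/h$ and Taylor-expand $f$ to order $r$) to obtain $E\hat f^{(s)}_n(x) - f^{(s)}(x) = O(h^{r-s})$. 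The tilting is precisely what kills the lower-order bias terms up to order $r$, leaving the leading term of order $h^{r-s}$.

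Next I would bound the variance (stochastic) term. Differentiating the kernel $s$ times introduces a factor $h^{-s}$, so each summand $p_{i,n} h^{-1} K^{(s)}((x-X_i)/h)$ scales like $h^{-(s+1)}$ times a weight of order $1/n$. A direct computation of $\operatorname{Var}\big(\hat f^{(s)}_n(x)\big)$ gives the familiar kernel-density order $\frac{1}{n h^{2s+1}}\int K^{(s)}(u)^2\,du\cdot f(x)$ up to lower-order corrections, because the tilting weights $p_{i,n} = \frac{1+h^2 g(X_i)}{n(1+\Delta_n)}$ differ from $1/n$ only by a factor $1 + O(h^2)$. Hence $\operatorname{Var}\big(\hat f^{(s)}_n(x)\big) = O\big(n^{-1} h^{-(2s+1)}\big)$, and by Chebyshev the centered term is $O_p\big(n^{-1/2} h^{-(2s+1)/2}\big)$.

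With the two orders in hand, $O(h^{r-s})$ for the bias and $O_p\big(n^{-1/2}h^{-(2s+1)/2}\big)$ for the fluctuation, I would balance them by equating the exponents, i.e. solving
\[
h^{r-s} \asymp n^{-1/2} h^{-(2s+1)/2},
\]
which yields $h \asymp n^{-1/(2r+1)}$. Substituting this choice of $h_n$ back gives both terms of order $n^{-(r-s)/(2r+1)}$, and the sum is therefore $O_p\big(n^{-(r-s)/(2r+1)}\big)$, as claimed. The associated measures $\mu_n$ are just those induced by the weights \eqref{associated measures} with this bandwidth.

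The main obstacle I anticipate is the variance estimate under tilting rather than the bias: one must verify that the random weights $p_{i,n}$, which depend on all the data through the normalizer $\Delta_n$, do not inflate the variance beyond the classical order. This requires showing $\Delta_n = O_p(h^2)$ (so $1+\Delta_n = 1 + o_p(1)$ and the normalization is harmless) together with the boundedness of $g = \frac{1}{2}\frac{f''}{f} + h^2\frac{e_2}{f} + \cdots$, which is guaranteed by the hypothesis that the ratios $f^{(2)}/f, \ldots, f^{(r)}/f$ are bounded. Care is also needed because the weights are not independent of the $X_i$ appearing in the kernel, so I would either condition appropriately or absorb the data-dependence of $p_{i,n}$ into an $o_p$ remainder, justifying that the leading stochastic behavior coincides with that of the ordinary kernel estimator.
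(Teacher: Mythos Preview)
Your approach is sound and arrives at the same bandwidth $h_n = n^{-1/(2r+1)}$ and the same rate, but the paper uses a slightly different decomposition that cleanly resolves exactly the obstacle you flag in your last paragraph. Rather than applying bias--variance directly to $\hat f_n^{(s)}$ (where, as you note, the random normalizer $1+\Delta_n$ couples all summands and obstructs a clean variance computation), the paper introduces the \emph{un-normalized} tilted estimator
\[
\tilde f_n^{(s)}(x) = \frac{1}{nh}\sum_{i=1}^n \bigl(1 + h^2 g(X_i)\bigr)\,\frac{\partial^s}{\partial x^s}K\!\left(\frac{x-X_i}{h}\right),
\]
whose summands are genuinely i.i.d., and splits the error as $(\hat f_n^{(s)} - \tilde f_n^{(s)}) + (\tilde f_n^{(s)} - f^{(s)})$. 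The second piece then admits precisely the bias--variance analysis you sketch, with no dependence to worry about; the first piece equals $\bigl(1 - (1+\Delta_n)^{-1}\bigr)\tilde f_n^{(s)}(x)$ and is handled by showing $E\Delta_n^2 = O(n^{-1})$ from the boundedness of $g$. In effect the paper's $\tilde f_n$ is a concrete realization of your proposal to ``absorb the data-dependence of $p_{i,n}$ into an $o_p$ remainder'': the difference $\hat f_n^{(s)} - \tilde f_n^{(s)}$ \emph{is} that remainder. What the paper's route buys is that no conditioning or delicate dependence argument is ever needed; what your direct decomposition buys is a shorter heuristic path, at the cost of leaving the dependence issue to be patched at the end.
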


\begin{proof}
Let $ \tilde{p}_{i,n} = n^{-1}\left(1 + h^{2}g(X_i) \right) $, the non-standardised version of $ p_{i,n} $ as in (\ref{associated measures}) and 
\begin{equation}
\tilde{f}_n(x) = \frac{1}{h} \sum_{i=1}^{n} \tilde{p}_{i,n} K \left(\frac{x - X_i}{h} \right).
\end{equation}

Since $$  \left( \hat{f}_n(x) - f(x) \right)^2 \leq   2 \left[ \left( \hat{f}_n(x) - \tilde{f}_n(x) \right)^2 +  \left( \tilde{f}_n(x) - f(x) \right)^2\right], $$ the result can be shown by showing
\begin{align}
 \left( \hat{f}^{(s)}_n(x) - \tilde{f}^{(s)}_n(x) \right)^2 = O_p \left( n^{-\frac{2(r-s)}{2r+1}} \label{1} \right)\\
 \left( \tilde{f}^{(s)}_n(x) - f^{(s)}(x) \right)^2 = O_p \left( n^{-\frac{2(r-s)}{2r+1}} \label{2} \right)
\end{align}
for some $ h = h_n >0 $ and probability measures $ \{ \mu_n : n \in \mathbb{N}\} $.

To show (\ref{1}), first let $  \Delta_n = n^{-1} \sum_{i=1}^{n} h^2 g(X_i) $ and observe that by choosing the associated measure as in (\ref{associated measures}), we have
\begin{align} \label{residual}
&\left( \hat{f}^{(s)}_n(x) - \tilde{f}^{(s)}_n(x) \right)^2 \nonumber \\ = 
&\left( 1 - \frac{1}{1 + \Delta_n} \right)^2 \left[ \frac{1}{nh} \sum_{i=1}^{n} \left( 1 + h^2 g(X_i) \right) \frac{\partial}{\partial x} K\left( \frac{x - X_i}{h} \right) \right]^2
\end{align}

Since $ g $ is, by definition and assumption, bounded on the real line, $ \Delta_n \leq 1/2 $ almost surely for sufficiently large $ n $. Therefore the first term on the right hand side of (\ref{residual}) can be bounded, almost surly, by $ 4 \Delta_n^2 $ and hence
\begin{align*}
E \left( 1 - \frac{1}{1 + \Delta_n} \right)^2 \leq 4 E \Delta_n^2 = O\left(n^{-1}\right),
\end{align*}
where the last equality follows from the Law of Large Numbers applied to the triangular array $ \{ h^2_i g(X_j) : i, j \in \mathbb{N} \} $. Thus,
\begin{align*}
\left( 1 - \frac{1}{1 + \Delta_n} \right)^2 = O_p \left( n^{-1} \right) = O_p \left( n^{-\frac{2(r-s)}{2r+1}} \label{2} \right)
\end{align*}
Again, since $ g $, $ K^{(r)} $ are bounded, the expectation of the second term of (\ref{residual}) can be bounded as follow,
\begin{align*}
& E  \left[ \frac{1}{nh} \sum_{i=1}^{n} \left( 1 + h^2 g(X_i) \right) \frac{\partial}{\partial x} K\left( \frac{x - X_i}{h} \right) \right]^2 \\
\leq & C_1 \left[ Var \tilde{f}^{(s)}_n(x) - \left( E \tilde{f}^{(s)}_n(x) \right)^2 \right]
\end{align*}
Thus we have shown (\ref{1}).

To show (\ref{2}), let $ \psi(x) = E \left( \tilde{f}^{(s)}_n(x) - f^{(s)}(x) \right) $, note that
\begin{align*}
& E \left( \tilde{f}^{(s)}_n(x) - f^{(s)}(x) \right)^2 \\ 
= & \psi(x)^2 + \frac{1}{nh^2} Var \left[ \left( 1 + h^2 g(X_i) \right) \frac{\partial}{\partial x} K\left( \frac{x - X_i}{h} \right) \right] \\
\leq & \psi(x)^2 + \frac{C_1^2}{nh^2} Var \left[ \frac{\partial}{\partial x} K\left( \frac{x - X_i}{h} \right) \right].
\end{align*}
Hence it suffices to show that the result hold for $ \psi $. To see this, we observe that as 
\begin{align*}
\frac{\partial^s}{\partial x^s} K\left( \frac{x - X_i}{h} \right) = h^{-s} K^{(s)} \left( \frac{x - X_i}{h} \right).
\end{align*}
Therefore
\begin{align*}
\psi(x) = f(x) - f^{(s)}(x) + \frac{h^{r-s}}{(r-1)!} f^{(r)} (\xi) \int u^r K(u) du,
\end{align*}
for some $ \xi \in [x,x+hu] $. Thus, by choosing $ h_n = n^{- \frac{1}{2r + 1}} $ we have
\begin{align*}
E \left( \tilde{f}^{(s)}_n(x) - f^{(s)}(x) \right)^2 = O \left( n^{-\frac{2(r-s)}{2r+1}}\right),
\end{align*}
hence proving (\ref{2}).
\end{proof}

\bibliographystyle{plainnat}
\bibliography{ref}

\begin{thebibliography}{2}
\providecommand{\natexlab}[1]{#1}
\providecommand{\url}[1]{\texttt{#1}}
\expandafter\ifx\csname urlstyle\endcsname\relax
  \providecommand{\doi}[1]{doi: #1}\else
  \providecommand{\doi}{doi: \begingroup \urlstyle{rm}\Url}\fi

\bibitem[Hall and Doosti(2012)]{Hall_Doosti}
P.~Hall and H.~Doosti.
\newblock Making a nonparametric density estimator more attractive, as well as
  more accurate, by tilting.
\newblock \emph{pre-print}, 2012.

\bibitem[Hall and Patil(1995)]{Hall_Patil_1995}
Peter Hall and Prakash Patil.
\newblock Formulae for mean integrated squared error of nonlinear wavelet-based
  density estimators.
\newblock \emph{The Annals of Statistics}, 23\penalty0 (3):\penalty0 pp.
  905--928, 1995.

\end{thebibliography}

\end{document}